\renewcommand{\gg}{\gamma}
\newcommand{\rest}{\restriction}
\renewcommand{\models}{\vDash}
\newtheorem{theorem}{Theorem}[section]
\newtheorem{definition}[theorem]{Definition}
\newtheorem{lemma}[theorem]{Lemma}
\numberwithin{figure}{section}
\newenvironment{proof}{{\it{
Proof.}}}{\nopagebreak\mbox{}{\hfill$\square$}
\par\bigskip}
\newcommand{\rthm}[1]{Theorem~\ref{#1}}
\newcommand{\rlem}[1]{Lemma~\ref{#1}}
\newcommand{\rdef}[1]{Definition~\ref{#1}}
\def\k{\kappa}
\def\a{\alpha}
\def\b{\beta}
\def\d{\delta}
\def\l{\lambda}
\def\P{{\mathcal{P} }}
\def\W{{\mathcal{W} }}
\def\Q{{\mathcal{ Q}}}
\def\R{{\mathcal R}}
\def\H{{\rm{HOD}}}
\def\M{{\mathcal{M}}}
\def\T {{\mathcal{T}}}
\def\U{{\mathcal{U}}}
\def\S{{\mathcal{S}}}
\def\iff{\mathrel{\leftrightarrow}}
\def\and{\mathrel{\kern1pt\&\kern1pt}}
\def\<#1>{\langle\,#1\,\rangle}
\def\inseg{\trianglelefteq}
\title{An inner model proof of the strong partition property for $\utilde{\delta}^2_1$ \thanks{2000 Mathematics Subject Classifications:
03E15, 03E45, 03E60.}
\thanks{Keywords: Mouse, inner model theory, descriptive set theory, hod mouse.}}
\author{Grigor Sargsyan \thanks{This material is partially based upon work supported by the National Science Foundation under Grant No DMS-0902628. Part of this paper was written while the author was a Leibniz Fellow at the Mathematisches Forschungsinstitut Oberwolfach.}\\
        Department of Mathematics\\
        Rutgers University\\
        Hill Center for the Mathematical Sciences\\
        110 Frelinghuysen Rd.\\
        Piscataway, NJ 08854 USA\\
        http://math.rutgers.edu/$\sim$gs481\\
        grigor@math.rutgers.edu}
\date{ \today}
\begin{document}

\maketitle

\begin{abstract}
Assuming $V=L(\mathbb{R})+AD$, using methods from inner model theory, we give a new proof of the strong partition property for $\utilde{\d}^2_1$. The result was originally proved in \cite{KKMW}.
\end{abstract}

The main theorem of this note is the following special case of Theorem 1.1 of \cite{KKMW} originally due to Kechris-Kleinberg-Moschovakis-Woodin. 

\begin{theorem}\label{main theorem} Assume $V=L(\mathbb{R})+AD$. Then $\utilde{\delta}^2_1$ has the strong partition property, i.e., $\utilde{\delta}^2_1\rightarrow (\utilde{\delta}^2_1)^{\utilde{\delta}^2_1}$ holds.
\end{theorem}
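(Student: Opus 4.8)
\medskip
\noindent\emph{Strategy of proof.}\ The plan is to reduce the partition relation to the determinacy of an ordinary game on $\omega$, to use inner model theory to supply a coding of the ordinals below $\utilde{\delta}^2_1$ by countable objects, and then to manufacture a homogeneous club from a winning strategy, in the spirit of Martin's proof that $\omega_1\to(\omega_1)^{\omega_1}$. First I would make the standard reductions (in this generality due to Kleinberg): it suffices to prove $\utilde{\delta}^2_1\to(\utilde{\delta}^2_1)^{\utilde{\delta}^2_1}$ for partitions of the strictly increasing continuous functions $f\colon\utilde{\delta}^2_1\to\utilde{\delta}^2_1$ of the appropriate uniform cofinality, since a homogeneous club of such functions yields a homogeneous set for an arbitrary partition of $[\utilde{\delta}^2_1]^{\utilde{\delta}^2_1}$. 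I would record the inputs available under $V=L(\mathbb{R})+AD$: $\utilde{\delta}^2_1$ is regular, it is a Suslin cardinal, $\utilde{\Sigma}^2_1$ has the scale property, and $\utilde{\delta}^2_1$ carries the club filter (restricted to a fixed cofinality) as a normal ultrafilter. The engine is the $\mathrm{HOD}$-analysis of $L(\mathbb{R})$, in its refined hod-premouse form: below the relevant level $\mathrm{HOD}^{L(\mathbb{R})}$ is a hod premouse, realized as the direct limit $\mathcal{M}_\infty$ of the countable iterable hod pairs $(\mathcal{P},\Sigma)$ of the relevant kind under the iteration maps $\pi_{\mathcal{P},\infty}$, and one identifies $\utilde{\delta}^2_1$ with the image of a definable ordinal of this limit, so that every $\alpha<\utilde{\delta}^2_1$ has the form $\pi_{\mathcal{P},\infty}(\bar\alpha)$ for a countable hod pair $(\mathcal{P},\Sigma)$ and some $\bar\alpha\in\mathcal{P}$.

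Second, I would develop the coding and the genericity machinery that feeds the game. To a real $z$ coding a countable hod pair $(\mathcal{P}_z,\Sigma_z)$ together with internal data — a designated ordinal of $\mathcal{P}_z$ and a finite approximation to a strictly increasing continuous function internal to $\mathcal{P}_z$, below the preimage of $\utilde{\delta}^2_1$ — assign the ordinal to which $\pi_{\mathcal{P}_z,\infty}$ sends the designated ordinal, together with the image of the finite approximation. The points to verify are: (i) \emph{surjectivity}: every $\alpha<\utilde{\delta}^2_1$, and every finite piece of an increasing continuous function into $\utilde{\delta}^2_1$, is decoded by some $z$; (ii) \emph{absoluteness of comparison}: ``the ordinals decoded by $z,z'$ are $\le$-related'' and ``the pieces decoded by $z,z'$ cohere'' are $\utilde{\Sigma}^2_1$ relations, obtained by coiterating the coded hod pairs and invoking the standard comparison and branch-uniqueness arguments; and (iii) the \emph{genericity iterations}: given a countable hod pair $(\mathcal{P},\Sigma)$ and any $\gamma<\utilde{\delta}^2_1$, there is a $\Sigma$-iterate $\mathcal{Q}$ of $\mathcal{P}$ over which a code for $\gamma$ is realized. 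Property (iii) is the inner-model ingredient that replaces the classical use of $\mathrm{WO}$: it guarantees that a partial play of the game below can always be \emph{continued} so as to absorb an arbitrary next ordinal below $\utilde{\delta}^2_1$, which is exactly what the homogeneity argument requires.

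Third comes the game and the extraction of the homogeneous club. Fix a partition $F\colon[\utilde{\delta}^2_1]^{\utilde{\delta}^2_1}\to 2$ of the increasing continuous functions of the appropriate type. Define an ordinary length-$\omega$ game $G_F$ in which the two players cooperate to produce a real $z$ as above — illegal $z$ being eliminated by the usual conventions — with the payoff for player II the \emph{local} condition, phrased via $\mathcal{M}_\infty$ and $\utilde{\Sigma}^2_1$-reflection, that the finite pieces assembled along \emph{any} continuation coherent with the play extend only to functions $f$ with $F(f)=1$. Since $V=L(\mathbb{R})$, the payoff set of $G_F$ is a set of reals lying in $L(\mathbb{R})$, so $G_F$ is determined by $AD$. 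Suppose $\tau$ is a winning strategy for II; the case of I is symmetric and yields a homogeneous club for the value $0$. Now iterate $\tau$ ``up to $\utilde{\delta}^2_1$'': build a tree of runs of $G_F$ in which II always follows $\tau$ while I feeds in codes for arbitrary ordinals, using the genericity iterations of (iii) to glue successive runs along an increasing $\utilde{\delta}^2_1$-chain of hod iterates; the set $C$ of $\gamma<\utilde{\delta}^2_1$ at which this process closes off is a club. By surjectivity and the coherence of the construction, every strictly increasing continuous $f$ of the appropriate type with range contained in $C$ is read off from a run coherent with $\tau$, so $F(f)=1$; hence $C$ is homogeneous and the strong partition relation follows.

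The step I expect to be the main obstacle is the mutual calibration of the fine-structural and descriptive-set-theoretic sides. One must pin down precisely which level of the $\mathcal{M}_\infty$-hierarchy corresponds to $\utilde{\delta}^2_1$, because the coding must be \emph{surjective} onto the ordinals below $\utilde{\delta}^2_1$ — which already fails just below that level — while the genericity iterations and the $\utilde{\Sigma}^2_1$-definability of the comparison relations and of the payoff of $G_F$ must hold \emph{at that level}, which is exactly where the hod-mouse theory near the top of the $L(\mathbb{R})$-hierarchy is most delicate (short-tree strategies, branch condensation, the subtle iterability hypotheses). Equivalently, the game's local payoff must be arranged to be genuinely a set of reals in $L(\mathbb{R})$, so that $AD$ applies, and yet strong enough that a winning strategy forces the $F$-value on a club's worth of functions; the reflection making this possible is the heart of the matter, and it is precisely here that the hod-pair coding — with its clean comparison behaviour and its genericity iterations — does the work that $\mathrm{WO}$ and the Coding Lemma do in the classical argument of \cite{KKMW}.
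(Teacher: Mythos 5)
Your overall strategy --- code ordinals below $\utilde{\delta}^2_1$ by countable iterable premice via the direct limit maps of the HOD-analysis, and then run a Martin-style argument --- is the strategy of the paper, but the paper does not redo the game argument: it invokes Martin's abstract criterion (Theorem 2.31 of Jackson's handbook chapter) that $\kappa$ has the strong partition property provided $\kappa$ is \emph{$\kappa$-reasonable}, i.e.\ provided there is a non-selfdual pointclass closed under $\exists^{\mathbb{R}}$ (here $\utilde{\Sigma}^2_1$) and a coding map $\phi$ of all $F:\kappa\rightarrow\kappa$ satisfying a definability condition and a boundedness condition. All of the game-theoretic content of your third paragraph is packaged in that theorem, so the real work is exactly the ``calibration'' you defer to your final paragraph, and your proposal leaves its two essential ingredients unproved. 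First, the relations $R_{\beta,\gamma}$ asserting that $x$ codes a function sending $\beta$ to $\gamma$ and nothing else must be $\utilde{\Delta}^2_1$; this is not just ``comparison of coded pairs is $\utilde{\Sigma}^2_1$'', because one also needs a $\Pi^2_1$-side characterization. The paper gets it by localizing the entire direct limit construction to initial segments $J_\alpha(\mathbb{R})$ for $\alpha$ ending weak gaps below $\utilde{\delta}^2_1$ (local systems $\mathcal{F}(\alpha,a)$ and local limits $\M_\infty(\alpha,a)$), and then proving that a single existential witness at one level already decides the value at every higher level --- a genuine comparison argument, coiterating the two iterates $\S$ and $\S^*$ inside $J_\xi(\mathbb{R})$ and using Dodd--Jensen commutativity $i\circ\pi^{\T}_{\P,\S}=j\circ\pi^{\T^*}_{\P,\S^*}$ to rule out two distinct decoded values. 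Nothing in your outline plays this role.

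Second, and more seriously, you never address boundedness: if $A$ is a $\utilde{\Sigma}^2_1$ set of codes all of which decode to some value at coordinate $\beta$, the decoded values must be bounded below $\utilde{\delta}^2_1$. This is the step that replaces the Coding Lemma in the classical argument and is what makes the extracted club actually homogeneous; in the paper it falls out of the local stratification just described together with the $\mathbb{R}$-admissibility of $\utilde{\delta}^2_1$ (the map sending $x\in A$ to the least level $\nu$ verifying $x\in R_\beta$ is $\utilde{\Sigma}_1$ over $J_{\kappa}(\mathbb{R})$, hence bounded). Your ``genericity iterations'' in item (iii) and the closing-off argument producing the club $C$ are stated as slogans precisely where this boundedness is needed, so as written the proposal does not close; supplying the level-by-level localization and the admissibility/boundedness argument (or an equivalent reflection) is the actual content of the paper's proof.
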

 
Our proof uses techniques from inner model theory and resembles Martin's proof of strong partition property for $\omega_1$ (see \cite{Jackson}). We expect that it will have other applications and in particular, can be used to show that under $AD^+$, if $\Gamma$ is any $\Pi^1_1$-like \footnote{i.e., closed under $\forall^{\mathbb{R}}$ and non-selfdual} scaled pointclass and $\d=\d(\Gamma)$ then $\d$ has the strong partition property. Our motivation to find a new proof of \rthm{main theorem} comes from a desire to prove Kechris-Martin like results for $\Pi^1_1$-like scaled pointclasses which will settle Question 19 of \cite{OpenProblems} and most likely, several other questions in the same neighborhood. We are optimistic that inner model theoretic techniques will settle this question and our optimism comes from the fact that the literature is already full of descriptive set theoretic results that have been proved using methods from inner model theory (for instance, see \cite{Hjorth01}, \cite{GeneralizedHjorth} and \cite{OIMT}). More importantly for us, recently, Neeman, in \cite{KMN}, found a proof of the Kechris-Martin theorem for $\Pi^1_3$ using techniques from inner model theory. Finally, we believe that our proof can be used to prove the strong partition property for many cardinals $\d=\d(\Gamma)$ where $\Gamma$ has strong closure properties. In fact, we expect that it can be used to prove Theorem 1.1 of \cite{KKMW} but we certainly haven't done so. We now start proving \rthm{main theorem}.\\

\begin{proof}
Let $\kappa=\utilde{\delta}^2_1$. By Martin's theorem (see Theorem 2.31 and Definition 2.30 of \cite{Jackson}), it is enough to show that $\k$ is $\k$-reasonable, i.e., there is a non-selfdual pointclass $\utilde{\Gamma}$ closed under $\exists^{\mathbb{R}}$ and a map $\phi$ with domain $\mathbb{R}$ satisfying:
\begin{enumerate}
\item $\forall x (\phi(x)\subseteq \k\times \k)$,
\item $\forall F: \k\rightarrow \k$, $\exists x \in \mathbb{R} ( \phi(x)=F)$,
\item $\forall \b<\k$, $\forall \gg<\k$, $R_{\b, \gg}\in \utilde{\Delta}$ where 
\begin{center}
$x\in R_{\b, \gg} \iff \phi(x)(\b, \gg) \wedge \forall \gg^\prime <\k (\phi(x)(\b, \gg^\prime) \rightarrow \gg^\prime=\gg)$
\end{center}
\item Suppose $\b<\l$, $A\in \exists^{\mathbb{R}}\utilde{\Delta}$, and $A\subseteq R_\b=\{ x : \exists \gg<\k R_{\b, \gg}(x)\}$. Then $\exists \gg_0<\k$ such that $\forall x\in A\exists \gg<\gg_0 R_{\b, \gg}(x)$. 
\end{enumerate}

Let $\Gamma=\Sigma^2_1$. We claim that $\utilde{\Gamma}$ is as desired and spend the rest of the proof to argue for it. In what follows, we will freely use the terminology developed for analyzing $\H$ of models of $AD^+$. This terminology has been exposited in many places including \cite{GeneralizedHjorth}, \cite{StrengthPFA1}, \cite{ATHM},  \cite{CMI}, \cite{OIMT} and more recently in \cite{SSW}. In particular, recall the definitions of suitable premouse, short tree, maximal tree, short tree iterable and etc. Given a suitable premouse $\P$, we let $\d_\P$ be its Woodin cardinal and $\l_\P$ be the least cardinal which is $<\d_\P$-strong in $\P$.

Suppose $a\in HC$. We say an $a$-premouse $\Q$ is \textit{good} if 
\begin{enumerate}
\item $\Q$ is $(\omega, \omega_1)$-iterable,
\item $\Q\models ZFC-Powerset$+``there are no Woodin cardinals" +``there is a largest cardinal"
\item $\Q$ is full, i.e., for every cutpoint $\xi$ of $\Q$, $Lp(\Q|\xi)\inseg\Q$. 
\end{enumerate} 
If $\Q$ is good then it has a unique $(\omega, \omega_1)$-iteration strategy with Dodd-Jensen property. We let $\Sigma_\Q$ be this strategy. Also, let $\eta_\Q$ be the largest cardinal of $\Q$. Given an iteration tree $\T$ on $\Q$ according to $\Sigma_{\Q}$ with last model $\R$ such that $\pi^{\T}$ exists, we let $\pi_{\Q, \R}:\Q\rightarrow \R$ be the iteration embedding. Notice that because $\Sigma_\Q$ has the Dodd-Jensen property, $\pi^\T$ is independent of $\T$. We say $\Q$ is \textit{excellent} if whenever $\R$ is a $\Sigma_\Q$-iterate of $\Q$ such that $\pi_{\Q, \R}$ is defined $\R$ is good. In this case, we also say that $\Sigma_\Q$ is fullness preserving.

Suppose now $\a<\k$ is such that it ends a weak gap (see \cite{Scales}). We then let 
\begin{center}
$\mathcal{F}(\a, a)=\{ \Q: J_\a(\mathbb{R})\models ``\Q$ is an excellent $a$-premouse"$\}$.
\end{center} 
Given $a$-premouse $\P$ such that $J_\a(\mathbb{R})\models ``\P$ is suitable and short tree iterable" we let
$\mathcal{F}(\a, a, \P)$ be the set of $\Q$ such that in $J_\a(\mathbb{R})$, there is a correctly guided short tree $\T$ on $\P$ with last suitable model $\P^*$ such that for some $\P^*$-cardinal $\eta\leq \l_{\P^*}$, $\Q=\P^*|(\l_{\P^*}^+)^{\P^*}$.  

\begin{lemma} Suppose $\a<\k$ ends a weak gap, $a\in HC$ and $\P$ is an $a$-premouse such that $J_\a(\mathbb{R})\models ``\P$ is suitable and short tree iterable". Then $\mathcal{F}(\a, a, \P)\subseteq \mathcal{F}(\a, a)$.
\end{lemma}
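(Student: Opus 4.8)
The plan is to realize each $\Q \in \mathcal{F}(\a,a,\P)$ as an initial segment, below the Woodin cardinal, of a \emph{suitable} premouse carrying a fullness preserving short tree strategy, and then to transfer iterability and fullness downward from that premouse to $\Q$. Fix $\Q\in\mathcal{F}(\a,a,\P)$, and let $\T$ be the correctly guided short tree on $\P$ in $J_\a(\mathbb{R})$ witnessing membership, with last suitable model $\P^*$, so that $\Q=\P^*|(\eta^+)^{\P^*}$ for some $\P^*$-cardinal $\eta\le\l_{\P^*}$. Since $\P$ is short tree iterable in $J_\a(\mathbb{R})$ and $\T$ is correctly guided and short, $\P^*$ is a $\Sigma_\P$-iterate and inherits a short tree strategy $\Sigma_{\P^*}$, namely the tail of $\Sigma_\P$; and $\P^*$, being suitable, is full below its Woodin cardinal $\d_{\P^*}$. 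Note $\eta\le\l_{\P^*}<\d_{\P^*}$ and $\d_{\P^*}$ is a limit cardinal of $\P^*$, so $(\eta^+)^{\P^*}<\d_{\P^*}$, i.e., $\Q$ lies strictly below $\d_{\P^*}$.

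First I would check that $J_\a(\mathbb{R})$ satisfies ``$\Q$ is good''. Clauses (2) and (3) of goodness are read off from suitability of $\P^*$: as $\Q=\P^*|(\eta^+)^{\P^*}$ with $\eta<\d_{\P^*}$, the premouse $\Q$ has a largest cardinal (namely $\eta$), no Woodin cardinals (since $\d_{\P^*}$ is the least Woodin of $\P^*$), and models $ZFC$ without Powerset; and fullness of $\Q$ follows from that of $\P^*$ together with $\eta\le\l_{\P^*}$, since for a cutpoint $\xi$ of $\Q$ one gets $Lp(\Q|\xi)=Lp(\P^*|\xi)\inseg\P^*|(\eta^+)^{\P^*}=\Q$. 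It remains to witness $(\omega,\omega_1)$-iterability of $\Q$, and for this I would pull back $\Sigma_{\P^*}$. Given a normal tree (or stack) $\U$ on $\Q$, each extender used has index below the relevant image of $\mathrm{Ord}\cap\Q$; using that $\Q\inseg\P^*$ and that the two premice agree below $\mathrm{Ord}\cap\Q$, one runs the corresponding tree $\U^*$ on $\P^*$, in which the $\xi$-th model of $\U$ is the initial segment of the $\xi$-th model of $\U^*$ cut at the successor of the image of $\eta$. Since every extender of $\U^*$ then has index below the image of $\d_{\P^*}$, the tree $\U^*$ is short at every stage, so $\Sigma_{\P^*}$ chooses cofinal well-founded branches for it, and the induced branches on $\U$ define a strategy $\Sigma'_\Q$ for $\Q$. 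Because $\Sigma_{\P^*}$ has the Dodd-Jensen property so does $\Sigma'_\Q$, and by uniqueness of Dodd-Jensen strategies for good premice, $\Sigma_\Q=\Sigma'_\Q$.

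Granting this, \emph{excellence} of $\Q$ is immediate: if $\R$ is a $\Sigma_\Q$-iterate with $\pi_{\Q,\R}$ defined then, by the previous paragraph, $\R=\R^*|(\pi_{\P^*,\R^*}(\eta)^+)^{\R^*}$ for the corresponding $\Sigma_{\P^*}$-iterate $\R^*$ of $\P^*$ along a short tree; since the short tree strategy of $\P^*$ preserves suitability (hence fullness below the Woodin) on trees that stay below the Woodin, $\R^*$ is suitable, and rerunning the goodness computation for $\R\inseg\R^*$ shows $\R$ is good. Hence $\Q$ is excellent, i.e., $\Q\in\mathcal{F}(\a,a)$, and since $\Q$ was arbitrary the lemma follows. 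All of this is to be carried out inside $J_\a(\mathbb{R})$: because $\a$ ends a weak gap, $J_\a(\mathbb{R})$ correctly computes the relevant $Lp$-operators, the $Q$-structures used to guide short trees, and the short tree strategy $\Sigma_\P$, so the iterability and fullness witnesses produced above are genuine witnesses in $J_\a(\mathbb{R})$.

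The step I expect to be the main obstacle is the bookkeeping in the second paragraph: one must verify carefully that the tree $\U^*$ derived on $\P^*$ from a tree $\U$ on $\Q$ really remains short at every stage (so that the \emph{short} tree strategy $\Sigma_{\P^*}$ applies), that its $\Sigma_{\P^*}$-branches descend compatibly to branches of $\U$, and that the drop structure and the ``full initial segment below the Woodin'' form of the models are preserved along the copied iteration --- in short, the precise interaction between initial-segment iterability and fullness preservation, together with the verification that all of it reflects correctly into $J_\a(\mathbb{R})$.
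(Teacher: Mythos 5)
Your proposal is correct and takes essentially the same approach as the paper's own (much terser) proof: view iterations of $\Q$ as iterations of the suitable iterate $\P^*$, inherit $(\omega,\omega_1)$-iterability from short tree iterability of $\P$, and conclude that the induced strategy is fullness preserving, hence $\Q$ is excellent. Your write-up simply fills in the copying and fullness details that the paper leaves implicit.
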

\begin{proof}
Fix $\Q\in \mathcal{F}(\a, a, \P)$. Work in $J_\a(\mathbb{R})$. Let $\T$ be a correctly guided short tree on $\P$ with last suitable model $\P^*$ such that for some $\P^*$-cardinal $\eta< \l_{\P^*}$, $\Q=\P^*|(\l_{\P^*}^+)^{\P^*}$. Because $\P$ is short tree iterable, we have that $\Q$ is $(\omega, \omega_1)$-iterable via a unique iteration strategy $\Sigma$. As the iterations of $\Q$ can also be viewed as iterations of $\P^*$, we have that $\Sigma$ is fullness preserving, implying that $\Q$ is excellent.
\end{proof}

Notice that if $\b>\a$ is such that $\b$ ends a weak gap and $J_\b(\mathbb{R})\models ``\P$ is suitable and short tree iterable $a$-premouse", then there could be $\Q\in \mathcal{F}(\b, a, \P)$ which is not in $\mathcal{F}(\a, a, \P)$. However, we always have the following easy lemma.

\begin{lemma}\label{inclusion} Suppose $\a<\b<\k$ are two ordinals which end weak gaps and such that $J_\a(\mathbb{R})$ and $J_\b(\mathbb{R})$ both satisfy that $\P$ is suitable and short tree iterable. Then $\mathcal{F}(\a, a, \P)\subseteq \mathcal{F}(\b, a, \P)$.
\end{lemma}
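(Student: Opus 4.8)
The plan is to check that a \emph{single} correctly guided short tree witnessing $\Q\in\mathcal{F}(\a, a, \P)$ transfers to witness $\Q\in\mathcal{F}(\b, a, \P)$, much as in the previous lemma. So I would fix $\Q\in\mathcal{F}(\a, a, \P)$ together with a correctly guided short tree $\T\in J_\a(\mathbb{R})$ on $\P$ with last suitable model $\P^*$ and with $\Q=\P^*|(\l_{\P^*}^+)^{\P^*}$. Since $J_\a(\mathbb{R})\subseteq J_\b(\mathbb{R})$, the objects $\T$, $\P^*$, $\Q$ all lie in $J_\b(\mathbb{R})$, and $\l_{\P^*}$ and $(\l_{\P^*}^+)^{\P^*}$ are computed internally to $\P^*$; so the whole matter reduces to verifying that $J_\b(\mathbb{R})$ still regards $\T$ as a correctly guided short tree on $\P$ with last suitable model $\P^*$ --- suitability of $\P$ itself in $J_\b(\mathbb{R})$ being a hypothesis of the lemma.

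The key point is the upward absoluteness, from $J_\a(\mathbb{R})$ to $J_\b(\mathbb{R})$, of ``$\T$ is a correctly guided short tree on $\P$,'' and I would get this from the $Lp$-operator. At each limit stage $\eta\le\lh(\T)$, that $\T$ is short and correctly guided is witnessed by the Q-structure $\Q(\M(\T\restriction\eta))$ being a proper initial segment of $Lp(\M(\T\restriction\eta))$ and by the branch $\T$ follows at $\eta$ being the unique cofinal branch whose own Q-structure agrees with $\Q(\M(\T\restriction\eta))$. Now $(\omega,\omega_1)$-iterability of a countable premouse is upward absolute --- a strategy witnessing it in $J_\a(\mathbb{R})$ is still one in $J_\b(\mathbb{R})$ --- so $Lp^{J_\a(\mathbb{R})}(\M)\inseg Lp^{J_\b(\mathbb{R})}(\M)$ for every $\M\in HC$. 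Hence each $\Q(\M(\T\restriction\eta))$, being the least proper initial segment of the pertinent $Lp$ that defines a failure of Woodinness of $\d(\T\restriction\eta)$, is literally the same object whether computed in $J_\a(\mathbb{R})$ or in $J_\b(\mathbb{R})$: it lies inside $Lp^{J_\a(\mathbb{R})}(\M(\T\restriction\eta))\inseg Lp^{J_\b(\mathbb{R})}(\M(\T\restriction\eta))$, and any strictly shorter candidate in $J_\b(\mathbb{R})$ would already be an initial segment of $Lp^{J_\a(\mathbb{R})}(\M(\T\restriction\eta))$ and hence would have been found in $J_\a(\mathbb{R})$. Therefore no Q-structure appears or disappears, $\T$ stays short in $J_\b(\mathbb{R})$, its branch choices stay correct, and its last model stays $\P^*$. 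Finally, since $\P$ is suitable and short tree iterable in $J_\b(\mathbb{R})$ and $\T$ is a correctly guided short tree on $\P$ there, its last model $\P^*$ is suitable in $J_\b(\mathbb{R})$ by the standard fact that suitability is preserved along correctly guided short trees; so $\T$ witnesses $\Q\in\mathcal{F}(\b, a, \P)$, as needed.

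The only step requiring any real care is this upward absoluteness of Q-structures, and it reduces --- as above --- to the (routine) upward absoluteness of iterability for countable mice together with the ``least initial segment'' characterization of Q-structures; everything else is bookkeeping about internal cardinal computations and initial segments. The standing assumptions that $\a$ and $\b$ end weak gaps, inherited from the definitions of $\mathcal{F}(\a, a)$ and $\mathcal{F}(\a, a, \P)$, are needed only to guarantee that both $J_\a(\mathbb{R})$ and $J_\b(\mathbb{R})$ satisfy enough determinacy and comprehension for suitability, short tree iterability and the $Lp$-operator on hereditarily countable objects to make sense and behave as expected. This is the sort of ``easy'' upward absoluteness that the surrounding analysis uses throughout, which is why I expect the argument to be short.
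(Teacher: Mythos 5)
Your proof is correct and takes essentially the same route as the paper's, whose entire argument is the one-line observation that any tree on $\P$ which is correctly guided and short in the sense of $J_\a(\mathbb{R})$ is also correctly guided and short in the sense of $J_\b(\mathbb{R})$. Your elaboration via the upward absoluteness of iterability, the resulting inclusion $Lp^{J_\a(\mathbb{R})}(\M)\inseg Lp^{J_\b(\mathbb{R})}(\M)$, and the stability of Q-structures is precisely the content behind that remark, and your appeal to the hypothesis that $\P$ is suitable and short tree iterable in $J_\b(\mathbb{R})$ to keep the last model suitable is the right way to close the argument.
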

\begin{proof}
The lemma follows because any iteration tree on $\P$ which is correctly guided and short in the sense of $J_\a(\mathbb{R})$ is also correctly guided and short in the sense of $J_\b(\mathbb{R})$.
\end{proof}

Next we define $\leq_{\a, a}$ on $\mathcal{F}(\a, a)$ by setting $\Q\leq_{\a, a}\R$ iff there is an iteration tree $\T$ on $\Q$ according to $\Sigma_\Q$ with last model $\S$ such that $\pi^\T$ exists, $\S\inseg\R$ and $\S=\R|(\eta_\S^+)^\R$. Also, let $\leq_{\a, a, \P}=\leq_{\a, a}\rest \mathcal{F}(\a, a)$. As usual, we have that

\begin{lemma}\label{directedness} $\leq_{\a, a}$ and $\leq_{\a, a, \P}$ are directed, and $\leq_{\a, a, \P}$ is dense in $\leq_{\a, a}$.
\end{lemma}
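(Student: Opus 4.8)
The plan is to obtain all three assertions from the comparison theorem for good $a$-premice together with the Dodd--Jensen property enjoyed by the strategies $\Sigma_\Q$, exactly as in the analogous facts about suitable premice in the analysis of $\H$ of models of $AD$ (see \cite{ATHM}); below only the bookkeeping is indicated.

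\emph{Directedness of $\leq_{\a,a}$.} Let $\Q,\R\in\mathcal{F}(\a,a)$. Both are excellent, hence good, hence carry the Dodd--Jensen strategies $\Sigma_\Q,\Sigma_\R$. Coiterate $\Q$ against $\R$ using these strategies; since neither has a Woodin cardinal the coiteration terminates, with last models $\Q^*$ and $\R^*$, one an initial segment of the other, say $\Q^*\inseg\R^*$. By Dodd--Jensen neither main branch drops, so the iteration embeddings $\pi_{\Q,\Q^*}$ and $\pi_{\R,\R^*}$ exist; in particular $\R^*$ is again excellent, so $\W:=\R^*\in\mathcal{F}(\a,a)$, and $\R\leq_{\a,a}\W$ is witnessed by the coiteration tree on $\R$ (here the last model equals $\W$ and $\W=\W|(\eta_\W^+)^\W$ because $\W$ is full). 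Since $\pi_{\Q,\Q^*}$ carries the largest cardinal $\eta_\Q$ of $\Q$ to the largest cardinal $\eta_{\Q^*}$ of $\Q^*$, and $\Q^*,\W$ are full with $\Q^*\inseg\W$, fullness forces $\Q^*=\W|(\eta_{\Q^*}^+)^\W$, so $\Q\leq_{\a,a}\W$ as well. Transitivity of $\leq_{\a,a}$ follows the same way by composing witnessing trees, using that by Dodd--Jensen the embeddings do not depend on the tree chosen.

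\emph{Density, and directedness of $\leq_{\a,a,\P}$.} Fix $\Q\in\mathcal{F}(\a,a)$ and work in $J_\a(\mathbb{R})$. Compare $\Q$ against $\P$, moving $\Q$ by $\Sigma_\Q$ and moving $\P$ by its short-tree strategy. As $\Q$ has no Woodin cardinal, no stage of this comparison reaches the image of the Woodin cardinal of $\P$, so the resulting tree $\T$ on $\P$ is correctly guided and short, with last suitable model $\P^*$, and the $\Q$-side main branch does not drop, so its last model $\S$ is a $\Sigma_\Q$-iterate of $\Q$ with $\pi_{\Q,\S}$ defined and $\S\inseg\P^*$. Since the short-tree strategy of $\P$ is fullness preserving, $\P^*$ is full, and together with $\S$ having largest cardinal $\eta_\S$ this gives $\S=\P^*|(\eta_\S^+)^{\P^*}$; a reflection argument using that $\l_{\P^*}$ is the least ${<}\d_{\P^*}$-strong cardinal of $\P^*$ gives in addition $\eta_\S\leq\l_{\P^*}$. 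Hence, taking $\eta=\eta_\S$ in the definition of $\mathcal{F}(\a,a,\P)$, we get $\S\in\mathcal{F}(\a,a,\P)$, and the tree $\Q\to\S$ witnesses $\Q\leq_{\a,a}\S$; this is density. Directedness of $\leq_{\a,a,\P}$ is now formal: for $\Q_1,\Q_2\in\mathcal{F}(\a,a,\P)$ we have $\Q_1,\Q_2\in\mathcal{F}(\a,a)$, so directedness of $\leq_{\a,a}$ gives $\R$ above both, density gives $\R'\in\mathcal{F}(\a,a,\P)$ with $\R\leq_{\a,a}\R'$, and transitivity gives $\Q_i\leq_{\a,a,\P}\R'$.

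\emph{Main obstacle.} Everything above is routine except, in the density argument, checking that the tree built on the $\P$-side really is \emph{short} and correctly guided --- that at each limit the comparison picks the correct cofinal branch and the process never turns maximal. This is where short-tree iterability of $\P$ in $J_\a(\mathbb{R})$, and the fact that a coiteration against a Woodin-free premouse cannot reach the Woodin of $\P$, are used; the accompanying fullness bookkeeping needed to get both $\S=\P^*|(\eta_\S^+)^{\P^*}$ and $\eta_\S\leq\l_{\P^*}$ at once is the other delicate point.
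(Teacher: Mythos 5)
The paper gives no argument for this lemma at all (it is introduced with ``As usual, we have that''), so the comparison is against the standard proof it alludes to; your proposal is that standard proof in outline, and its architecture --- coiteration plus fullness for directedness, coiteration of $\Q$ against $\P$ for density, with the shortness of the $\P$-side tree secured by the absence of Woodin cardinals in $\Q$ --- is the right one. Two of your justifications need repair, one of them substantive. The minor one: in the coiteration of $\Q$ with $\R$, the non-dropping of both main branches is not given by Dodd--Jensen, since Dodd--Jensen requires an ambient embedding of a premouse into an iterate of itself and here $\Q$ and $\R$ are unrelated; what rules out a drop is fullness (a drop on, say, the $\R$-side would make $\R^*$ project across a cutpoint of the common part, producing a sound mouse that fullness of the non-dropping side's last model would swallow, a contradiction). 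Dodd--Jensen is instead what gives you the independence of $\pi_{\Q,\R}$ from the witnessing tree and the commutativity of the system, which your transitivity remark correctly relies on.

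The substantive issue is the assertion that ``a reflection argument'' yields $\eta_\S\leq\l_{\P^*}$. I do not see such an argument, and nothing in the comparison prevents it from terminating with $OR^{\S}$ strictly between $(\l_{\P^*}^+)^{\P^*}$ and $\d_{\P^*}$: the $\Q$-side can perfectly well line up with $\P^*$ past $\l_{\P^*}$, and then $\S$ is not in $\mathcal{F}(\a,a,\P)$. The standard repair is not reflection but a further iteration: since $\l_{\P^*}$ is ${<}\d_{\P^*}$-strong in $\P^*$, choose an extender $E$ on the $\P^*$-sequence with critical point $\l_{\P^*}$ and length above $OR^{\S}$, and replace $\P^*$ by $\P^{**}=Ult(\P^*,E)$. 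This is again a correct iterate of $\P$, it agrees with $\P^*$ up to $\lh(E)>OR^{\S}$, so $\eta_\S$ is still a $\P^{**}$-cardinal with $\S=\P^{**}|(\eta_\S^+)^{\P^{**}}$, and now $\eta_\S<\l_{\P^{**}}=\pi_E(\l_{\P^*})$. With that step inserted in place of the reflection claim, your density argument (and hence the whole proof) goes through.
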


Let then $\M_\infty(\a, a)$ be the direct limit of $(\mathcal{F}(\a, a), \leq_{\a, a})$ under the iteration embeddings $\pi_{\Q, \R}$. Also, let $\M_\infty(\a, a, \P)$ be the direct limit of $(\mathcal{F}(\a, a, \P), \leq_{\a, a, \P})$ under the iteration embeddings $\pi_{\Q, \R}$. It follows from \rlem{directedness} that

\begin{lemma}\label{equality of direct limits} $\M_\infty(\a, a)=\M_\infty(\a, a, \P)$.
\end{lemma}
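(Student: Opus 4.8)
The plan is to invoke the standard fact that the direct limit of a commuting directed system of embeddings is unchanged, up to the canonical identification, when one passes to a cofinal directed subsystem; the relevant input is exactly \rlem{directedness}, which tells us that $(\mathcal{F}(\a, a, \P), \leq_{\a, a, \P})$ is a directed subsystem of $(\mathcal{F}(\a, a), \leq_{\a, a})$ that is dense in it.

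First I would note that both systems are genuine directed systems of models and \emph{commuting} embeddings: if $\Q \leq_{\a, a} \R \leq_{\a, a} \S$ (all in $\mathcal{F}(\a, a)$), then $\pi_{\R, \S} \circ \pi_{\Q, \R} = \pi_{\Q, \S}$, since each $\pi_{\Q, \R}$ is the unique iteration embedding given by $\Sigma_\Q$ and, by the Dodd--Jensen property, does not depend on the witnessing tree. Hence $\M_\infty(\a, a)$ and $\M_\infty(\a, a, \P)$ are both well-defined, with direct limit maps $\pi_{\Q, \infty} : \Q \to \M_\infty(\a, a)$ for $\Q \in \mathcal{F}(\a, a)$ and $\tilde\pi_{\Q, \infty} : \Q \to \M_\infty(\a, a, \P)$ for $\Q \in \mathcal{F}(\a, a, \P)$, satisfying $\pi_{\Q, \infty} = \pi_{\R, \infty} \circ \pi_{\Q, \R}$ and similarly for $\tilde\pi$.

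Then I would define the canonical elementary map $\sigma : \M_\infty(\a, a, \P) \to \M_\infty(\a, a)$ by $\sigma(\tilde\pi_{\Q, \infty}(x)) = \pi_{\Q, \infty}(x)$ for $\Q \in \mathcal{F}(\a, a, \P)$ and $x \in \Q$; this is well-defined and elementary because on $\mathcal{F}(\a, a, \P)$ the two families of direct limit maps factor through the same embeddings $\pi_{\Q, \R}$. The map $\sigma$ is surjective: any element of $\M_\infty(\a, a)$ has the form $\pi_{\Q, \infty}(x)$ with $\Q \in \mathcal{F}(\a, a)$, and by density there is $\R \in \mathcal{F}(\a, a, \P)$ with $\Q \leq_{\a, a} \R$, whence $\pi_{\Q, \infty}(x) = \pi_{\R, \infty}(\pi_{\Q, \R}(x)) = \sigma(\tilde\pi_{\R, \infty}(\pi_{\Q, \R}(x)))$. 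It is injective: if $\sigma(\tilde\pi_{\Q, \infty}(x)) = \sigma(\tilde\pi_{\Q', \infty}(x'))$, then $\pi_{\Q, \infty}(x) = \pi_{\Q', \infty}(x')$, so by directedness $\Q$ and $\Q'$ have a common $\leq_{\a, a}$-upper bound in $\mathcal{F}(\a, a)$ at which the images of $x$ and $x'$ agree; applying density once more we find such an upper bound inside $\mathcal{F}(\a, a, \P)$, so $\tilde\pi_{\Q, \infty}(x) = \tilde\pi_{\Q', \infty}(x')$. Thus $\sigma$ is an isomorphism and, under the usual identification, $\M_\infty(\a, a) = \M_\infty(\a, a, \P)$.

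I do not anticipate a real obstacle, as the entire combinatorial content has been packaged into \rlem{directedness}. The only points needing care are checking that the embeddings truly commute — which is precisely where the Dodd--Jensen property is used — and keeping track of the direction of the density statement, namely that every $\Q \in \mathcal{F}(\a, a)$ lies $\leq_{\a, a}$-below some $\R \in \mathcal{F}(\a, a, \P)$, so that the cofinality computation underlying surjectivity of $\sigma$ is valid.
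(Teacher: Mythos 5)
Your proof is correct and is exactly the standard argument the paper has in mind: the paper simply asserts that the lemma ``follows from Lemma \ref{directedness}'', and your write-up spells out the canonical isomorphism between a commuting directed system and a dense (cofinal) directed subsystem, with the Dodd--Jensen property guaranteeing commutativity. Nothing further is needed.
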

We let $\pi_{\Q, \infty}:\Q\rightarrow \Q^*\inseg\M_\infty(\a, a, \P)$ be the direct limit embedding\footnote{We drop $\a$ and $a$ from our notation as the embedding doesn't depend on them}. 

We can now define $\phi$. First let $S$ be the set of those reals $x$ which code a pair $(y_x, \P_x)$ such that 
\begin{enumerate}
\item $y_x\in \mathbb{R}$,
\item for some $\a<\k$ ending a weak gap, $J_\a(\mathbb{R})\models ``\P_x$ is suitable and short tree iterable $y_x$-premouse". 
\end{enumerate} 
Clearly $S$ is $\Sigma^2_1$. Also let $f:\kappa^2\rightarrow \k$ be the function given by: for all $(\b, \gg)\in \kappa^2$, $f(\b, \gg)$ is the least ordinal $\a$ such that $\alpha$ ends a weak gap and $J_\a(\mathbb{R})\models \max(\b, \gg)<\utilde{\d}^2_1$. Notice that $f$ is $\Delta^2_1$ in codes. We define $\phi$ as follows. 
\begin{definition}\label{definition of phi} If 
$x\not \in S \cap \mathbb{R}$ then let $\phi(x)=\emptyset$. Suppose now $x\in S$. Let $(y_x, \P_x)$ be the pair coded by $x$.  Given $\b, \gg<\k$, we let $(\b, \gg)\in \phi(x)$ iff letting $\P=\P_x$ and $f(\b, \gg)=\a$ then for some $a\in \P$ the following holds in $J_\a(\mathbb{R})$:
\begin{enumerate}
\item $\P$ is suitable and short tree iterable, 
\item $a$ is the collapse of $x(0)$, 
\item $a\subseteq \l_\P\times \l_\P$,
\item there is a correctly guided short tree $\T$ with last model $\S$ such that $\pi_{\P, \S}$ exists and an $\S$-cardinal $\eta$ such that
\begin{enumerate}
\item $(\eta^+)^\S<\l^\S$,
\item if $\Q=\S|(\eta^+)^\S$ and $a^\Q=\pi_{\P, \S}(a)\rest \eta$ then $(\b, \gg)\in \pi_{\Q, \infty}(a^\Q)\cap rng(\pi_{\Q, \infty})$.
\end{enumerate} 
\end{enumerate}
\end{definition}
Given $\a<\Theta$ we let $S_a$ and $\phi_\a$ be what the above definitions give over $J_\a(\mathbb{R})$. 
The following lemmas establish that $\phi$ is as desired. We start with the following easy lemma.

\begin{lemma}\label{easy lemma} For each $x\in \mathbb{R}$, $\phi(x)=\cup_{\a<\k} \phi_\a(x)$. 
\end{lemma}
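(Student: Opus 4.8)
The plan is to prove both inclusions of \rlem{easy lemma} by verifying that every object referred to in \rdef{definition of phi} is computed inside $J_\alpha(\mathbb{R})$, for $\alpha<\k$ ending a weak gap, exactly as in $V$. The guiding observation is that the value of $\phi(x)$ at a pair $(\b,\gg)$ depends only on the single level $J_{f(\b,\gg)}(\mathbb{R})$ together with hereditarily countable objects, and that $f(\b,\gg)<\k$ for all $\b,\gg<\k$; thus, informally, $\phi$ is continuous along the chain $J_\alpha(\mathbb{R})\nearrow L(\mathbb{R})$. I would isolate three absoluteness points. First, whether a real $x$ codes a pair $(y_x,\P_x)$ that is put into $S$ by some $J_{\alpha''}(\mathbb{R})$ --- that is, by $J_{\alpha''}(\mathbb{R})$ satisfying that $\P_x$ is suitable and short tree iterable --- is a statement about the fixed structure $J_{\alpha''}(\mathbb{R})$, hence absolute between $V$ and every $J_\alpha(\mathbb{R})$ with $\alpha''<\alpha$; so $x\in S$ iff $x\in S_\alpha$ as soon as $\alpha$ is large enough to see the witness $\alpha''$. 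Second, the predicates ``$\alpha'$ ends a weak gap'' and ``$J_{\alpha'}(\mathbb{R})\models\max(\b,\gg)<\utilde{\delta}^2_1$'' are absolute between $V$ and $J_\alpha(\mathbb{R})$ for $\alpha'<\alpha$, so $f_\alpha(\b,\gg)=f(\b,\gg)$ whenever $f(\b,\gg)<\alpha$. Third --- and this is the point that needs care --- the direct limits $\M_\infty(\alpha_0,a)$ and $\M_\infty(\alpha_0,a,\P)$ and the embeddings $\pi_{\Q,\infty}$ occurring in \rdef{definition of phi} are computed identically in $V$ and in any $J_\alpha(\mathbb{R})$ with $\alpha_0<\alpha$.

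For the third point, fix $\alpha_0<\k$ ending a weak gap, $a\in HC$, and $\P$ such that $J_{\alpha_0}(\mathbb{R})$ satisfies that $\P$ is suitable and short tree iterable. The directed systems $(\mathcal{F}(\alpha_0,a,\P),\leq_{\alpha_0,a,\P})$ and $(\mathcal{F}(\alpha_0,a),\leq_{\alpha_0,a})$ are subsets of $HC$ that are definable from $J_{\alpha_0}(\mathbb{R})$ using only the notions of good, excellent, and correctly guided short tree, and the canonical Dodd--Jensen iteration strategies $\Sigma_\Q$; each of these is absolute, and the restriction of each $\Sigma_\Q$ to countable iteration trees is hereditarily countable. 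Since any $J_\alpha(\mathbb{R})$ with $\alpha>\alpha_0$ contains $\mathbb{R}$ and hence all of $HC$, it recognizes exactly the same directed systems as $V$, it forms the same direct limits $\M_\infty(\alpha_0,a,\P)$ and $\M_\infty(\alpha_0,a)$ (well-foundedness of these direct limits is absolute, as a hypothetical infinite descending chain would be a hereditarily countable object available to $J_\alpha(\mathbb{R})$; cf.\ also \rlem{equality of direct limits}), and it computes the same maps $\pi_{\Q,\infty}$. The remaining clauses of \rdef{definition of phi} --- that $a$ is the collapse of $x(0)$, that $a\subseteq\l_\P\times\l_\P$, the existence of a correctly guided short tree $\T$ on $\P$ with last model $\S$ and iteration map $\pi_{\P,\S}$, the inequality $(\eta^+)^\S<\l^\S$, and the passage to $\Q=\S|(\eta^+)^\S$ and $a^\Q$ --- concern only objects of $HC$ and the level $J_{\alpha_0}(\mathbb{R})$, so they too are absolute between $V$ and $J_\alpha(\mathbb{R})$.

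With the three points in hand, the lemma follows by routine bookkeeping, using two standard facts about $L(\mathbb{R})$ below $\k=\utilde{\delta}^2_1$: the ordinals $<\k$ that end a weak gap are cofinal in $\k$, and, writing $\k_\alpha=(\utilde{\delta}^2_1)^{J_\alpha(\mathbb{R})}$, we have $\k_\alpha\leq\k$ for each such $\alpha$ while $\sup\{\k_\alpha:\alpha<\k\text{ ends a weak gap}\}=\k$ (and $\k_\alpha=\sup\{\k_{\alpha'}:\alpha'<\alpha\text{ ends a weak gap}\}$ when $\alpha$ itself ends a weak gap, which yields the second point above). For $\subseteq$: given $(\b,\gg)\in\phi(x)$, put $\alpha_0=f(\b,\gg)<\k$, fix a witness $\alpha''<\k$ to $x\in S$, and pick $\alpha<\k$ ending a weak gap with $\alpha>\alpha_0$ and $\max(\b,\gg),\alpha''<\k_\alpha$; then $x\in S_\alpha$, $f_\alpha(\b,\gg)=\alpha_0$, and the very same witnesses from \rdef{definition of phi} that place $(\b,\gg)$ in $\phi(x)$ place it in $\phi_\alpha(x)$. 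For $\supseteq$: given $(\b,\gg)\in\phi_\alpha(x)$ with $\alpha<\k$ ending a weak gap, we have $\b,\gg<\k_\alpha\leq\k$, the witnesses supplied by $\phi_\alpha$ live in $J_{\alpha''}(\mathbb{R})$, in $J_{\alpha_0}(\mathbb{R})$ with $\alpha_0=f_\alpha(\b,\gg)<\alpha$, and in $HC$, and by the three absoluteness points they witness $(\b,\gg)\in\phi(x)$. The main obstacle is the third point: one must be certain that $J_\alpha(\mathbb{R})$ recognizes precisely the same excellent $a$-premice and Dodd--Jensen strategies as $V$, so that it builds the identical direct limit system; granted this, all that remains is absoluteness of statements about fixed countable levels of $L(\mathbb{R})$.
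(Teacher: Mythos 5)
Your proof is correct and takes essentially the same approach as the paper, whose entire argument is to name a sufficiently large level, namely $\a=f(\max(\b,\gg),f(\b,\gg))$, and assert $(\b,\gg)\in\phi_\a(x)$, leaving implicit exactly the absoluteness points you spell out (chiefly, that the clause defining $\phi(x)(\b,\gg)$ is a statement about the fixed set structure $J_{f(\b,\gg)}(\mathbb{R})$ and is therefore evaluated identically by $V$ and by any $J_\a(\mathbb{R})$ with $f(\b,\gg)<\k_\a$). No changes needed.
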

\begin{proof} Suppose $(\b, \gg)\in \phi(x)$. Then letting $\a=f(max(\b, \gg), f(\b, \gg))$. Then $(\b, \gg)\in \phi_\a(x)$. The other direction is similar.
\end{proof}

\begin{lemma} For every $x\in \mathbb{R}$, $\phi(x)\subseteq \kappa\times \kappa$.
\end{lemma}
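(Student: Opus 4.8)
The plan is to use \rlem{easy lemma} to bring the entire construction inside a level $J_\a(\mathbb{R})$ with $\a<\k$, and then to observe that no ordinal $\geq\k$ can possibly occur there. First I would handle the trivial case: if $x\notin S$ then $\phi(x)=\emptyset\subseteq\k\times\k$ and there is nothing to prove. So suppose $x\in S$ and fix $(\b,\gg)\in\phi(x)$; it suffices to show $\b,\gg<\k$. By \rlem{easy lemma} there is some $\a<\k$ with $(\b,\gg)\in\phi_\a(x)$, and I would fix such an $\a$.

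Next I would unravel what $(\b,\gg)\in\phi_\a(x)$ means. By definition $\phi_\a$ is obtained by interpreting \rdef{definition of phi} inside $J_\a(\mathbb{R})$, so $(\b,\gg)\in\phi_\a(x)$ is a first-order assertion of $J_\a(\mathbb{R})$ about $x,\b,\gg$, and every object it mentions -- the ordinal $f(\b,\gg)$, the $a$-premice $\P=\P_x$, $\S$ and $\Q$, the correctly guided short tree $\T$, the set $a^\Q=\pi_{\P,\S}(a)\rest\eta$, the direct limit $\M_\infty(f(\b,\gg),a,\P)$ and the embedding $\pi_{\Q,\infty}:\Q\to\Q^*\inseg\M_\infty(f(\b,\gg),a,\P)$ -- is an element of $J_\a(\mathbb{R})$. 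In particular $\b$ and $\gg$ are ordinals lying in $J_\a(\mathbb{R})$, so $\b,\gg<{\rm Ord}\cap J_\a(\mathbb{R})$; and since $\k=\utilde{\d}^2_1$ is a cardinal, ${\rm Ord}\cap J_\a(\mathbb{R})<\k$ for every $\a<\k$. Hence $\b,\gg<\k$. Alternatively one may track the bound through the last clause of \rdef{definition of phi}, which places $(\b,\gg)$ in $\pi_{\Q,\infty}(a^\Q)$: since $a\subseteq\l_\P\times\l_\P$ and $a^\Q\in\Q$, the set $a^\Q$ consists of pairs of ordinals of $\Q$, so $\b,\gg<{\rm Ord}\cap\Q^*\leq{\rm Ord}\cap\M_\infty(f(\b,\gg),a,\P)$, and the latter direct limit, being an element of $J_\a(\mathbb{R})$ with $\a<\k$, has ordinal height below $\k$.

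The only step that is not a routine verification is the appeal to \rlem{easy lemma}. Read literally, \rdef{definition of phi} quantifies over the level $J_{f(\b,\gg)}(\mathbb{R})$, and nothing internal to that definition forces $f(\b,\gg)$ -- or the ordinals $\b,\gg$ -- to be below $\k$; if $f(\b,\gg)$ were large, then $\M_\infty(f(\b,\gg),a,\P)$ could be long and its image points correspondingly large. So the real content is precisely \rlem{easy lemma}, which says that a pair never has to enter $\phi(x)$ for the first time at a level $\geq\k$; granted that, the rest is bookkeeping about where ordinals live in the $J$-hierarchy.
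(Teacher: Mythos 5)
Your proof is correct and is essentially the paper's argument: the paper's entire proof is the observation that $\M_\infty(\a,a)\subseteq J_\a(\mathbb{R})$ for every $\a$ and $a$, which is exactly your ``alternative'' route of tracking $(\b,\gg)$ through $\pi_{\Q,\infty}(a^\Q)$ into a direct limit of height below $\k$. Your main route via \rlem{easy lemma} is just a repackaging of the same point, so no further comment is needed.
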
 
\begin{proof} The claim follows from the fact that for every $\a$ and $a$, $\M_{\infty}(\a, a)\subseteq J_\a(\mathbb{R})$. 
\end{proof}

\begin{lemma} Suppose $F:\kappa\rightarrow \kappa$. Then there is $x\in dom(\phi)$ such that $\phi(x)=F$. 
\end{lemma}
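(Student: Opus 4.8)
The plan is to produce a real $x$ coding a real $y_x$, a $y_x$-premouse $\P_x$, and a code $x(0)$ whose collapse $a$ lies in $\P_x$ with $a\subseteq\l_{\P_x}\times\l_{\P_x}$, in such a way that $\phi(x)=F$. We identify $F$ with its graph, $F=\{(\b,\gamma)\in\kappa\times\kappa : F(\b)=\gamma\}$. By \rlem{easy lemma} it is enough to arrange, for each $(\b,\gamma)$, that clause~(4) of \rdef{definition of phi} holds for $(\b,\gamma)$ inside $J_\a(\mathbb{R})$, with $\a=f(\b,\gamma)$, exactly when $(\b,\gamma)\in F$.

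First I would fix $\P=\P_x$ and $y=y_x$: take a suitable $y$-premouse $\P$, for some real $y$, which is short tree iterable in $J_\a(\mathbb{R})$ for a tail of ordinals $\a<\kappa$ ending weak gaps. Such a $\P$ exists by a reflection argument: being suitable and short tree iterable is eventually absolute among the $J_\a(\mathbb{R})$ with $\a<\kappa$ ending a weak gap, and some such $\P$ exists, e.g.\ inside a sufficiently elementary countable hull. Once $\P$ is fixed, \rlem{equality of direct limits} makes the direct limit $\M_\infty(\a,a)=\M_\infty(\a,a,\P)$ and the embeddings $\pi_{\Q,\infty}$, $\Q\in\mathcal{F}(\a,a,\P)$, available uniformly.

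The core is the choice of $a$, and here I would lean on two facts from the analysis of $\H$ in models of $AD^+$. \emph{Covering:} the ranges $rng(\pi_{\Q,\infty})$, as $\a<\kappa$ ends a weak gap, $a\in HC$, and $\Q\in\mathcal{F}(\a,a)$ vary, exhaust $\kappa$; more precisely, for every $\xi<\kappa$ there are $\a$, a correctly guided short tree $\T$ on $\P$ with last model $\S$ and $\pi_{\P,\S}$ defined, and an $\S$-cardinal $\eta$ with $(\eta^+)^\S<\l^\S$, such that $[0,\xi]\subseteq rng(\pi_{\Q,\infty})$ and $\xi<\pi_{\Q,\infty}(\eta)$, where $\Q=\S|(\eta^+)^\S$; moreover $rng(\pi_{\Q,\infty})$ and the uncollapse $\pi_{\Q,\infty}^{-1}\rest[0,\xi]$ cohere as $\T,\eta,\a$ vary, by the coherence underlying \rlem{directedness}. \emph{Genericity:} because $\P$ is suitable and the extender choices in a correctly guided short tree may be freely steered (only the branch choices being pinned down by $Q$-structures), a genericity-iteration argument driven by $F$ produces $a\in\P$ — realized as the image of a name in a fixed base premouse — such that for every correctly guided short tree $\T$ on $\P$ with $\pi_{\P,\S}$ defined and every admissible $\eta$, $\pi_{\P,\S}(a)\rest\eta$ is contained in the $\pi_{\Q,\infty}$-pullback $\{(\bar\b,\bar\gamma)\in\eta\times\eta : (\pi_{\Q,\infty}(\bar\b),\pi_{\Q,\infty}(\bar\gamma))\in F\}$, while for suitably chosen $\T,\eta$ it realizes any prescribed member of that pullback. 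Granting these: if $(\b,\gamma)\in F$, apply covering with $\xi=\max(\b,\gamma)$ and then steer so that $\pi_{\P,\S}(a)\rest\eta$ contains $(\pi_{\Q,\infty}^{-1}(\b),\pi_{\Q,\infty}^{-1}(\gamma))$, witnessing clause~(4); conversely any witness $\T,\eta$ to clause~(4) for $(\b,\gamma)$ gives $(\bar\b,\bar\gamma)\in\pi_{\P,\S}(a)\rest\eta$ with $\pi_{\Q,\infty}(\bar\b)=\b$ and $\pi_{\Q,\infty}(\bar\gamma)=\gamma$, and the containment half of genericity — together with \rlem{directedness}, \rlem{equality of direct limits}, and the rigidity of the correctly guided short tree strategy — forces $(\b,\gamma)\in F$. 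Hence $\phi(x)=F$, and $x\in S\subseteq dom(\phi)$.

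The hardest part will be the genericity fact together with the uniformity of $a$: showing that a single $a\in\P$, i.e.\ a single real, codes $F$ simultaneously through all the relevant correctly guided short trees, yielding every true pair of $F$ via some short tree yet no false pair via any short tree. This is precisely where the behaviour of the direct limit under \emph{short}, rather than arbitrary, iteration trees must be used, and it is the analogue in the present setting of the delicate coding step in Martin's proof of the strong partition property for $\omega_1$.
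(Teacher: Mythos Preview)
Your proposal has a genuine gap at the step you yourself flag as hardest: the ``genericity'' claim. You assert that a genericity-iteration argument driven by $F$ produces $a\in\P$ whose images under \emph{all} correctly guided short trees pull back into $F$, while some such tree realizes each pair of $F$. But genericity iterations make reals generic over iterates; they do not manufacture subsets of $\l_\P\times\l_\P$ with prescribed behaviour under all future iteration maps. You give no mechanism by which $F$ --- an arbitrary function $\kappa\to\kappa$ living outside $\P$ --- is encoded into a single $a\in\P$, and the phrase ``realized as the image of a name in a fixed base premouse'' does not help, since $\P$ is a fixed premouse, not a generic extension. As stated, the existence of such an $a$ is essentially what you are trying to prove.

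The paper avoids this difficulty entirely by a much shorter route you have overlooked. Since $V=L(\mathbb{R})$, every set is ordinal definable from a real, so $F\in\H_y$ for some $y\in\mathbb{R}$. The direct limit construction for $\H$ (over $y$) then yields a suitable $y$-premouse $\P$ with $F\in rng(\pi_{\P,\emptyset,\infty})$ and $\pi_{\P,\emptyset,\infty}(\l_\P)=\kappa$. One simply takes $a=\pi_{\P,\emptyset,\infty}^{-1}(F)\subseteq\l_\P\times\l_\P$ and lets $x$ code $(y,\P)$ with $x(0)$ coding $a$. The verification that $\phi(x)=F$ is then immediate from \rlem{easy lemma} and the commutativity of the direct limit maps: no covering or genericity is needed, because $a$ is literally the preimage of $F$. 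Your reflection argument for the existence of $\P$, and your covering fact, are both subsumed by this single appeal to the $\H$ analysis.
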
  
\begin{proof}
Fix $y$ such that $F\in \H_y$. There is then a suitable $\P$ over $y$ such that $F\in rng(\pi_{\P, \emptyset, \infty})$\footnote{Recall the direct limit construction that converges to $\H|\Theta$. Here $\pi_{\P, \emptyset, \infty}$ is the direct limit embedding given by $\emptyset$-iterability embeddings. For more details see either of the aforementioned papers.} Notice that $\pi_{\P, \emptyset, \infty}(\l_\P)=\k$ (see Chapter 8 of \cite{OIMT}). Let then $a\subseteq \l_\P\times \l_\P$ be such that $\pi_{\P, \emptyset, \infty}(a)=F$ and let $x$ code the pair $(y, \P)$ such that $x(0)=a$. It is then easy to see that $\phi(x)=F$ (use \rlem{easy lemma}). 
\end{proof}

\begin{lemma} Suppose $\b, \gg<\k$. Let 
\begin{center}
$x\in R_{\b, \gg} \iff \phi(x)(\b, \gg) \wedge \forall \gg^\prime <\k (\phi(x)(\b, \gg^\prime) \rightarrow \gg^\prime=\gg)$.
\end{center}
Then $R_{\b, \gg}$ is $\utilde{\Delta}^2_1$.
\end{lemma}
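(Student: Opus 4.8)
The plan is to exhibit both a $\utilde{\Pi}^2_1$ and a $\utilde{\Sigma}^2_1$ definition of $R_{\b,\gg}$. The basic tool is the standard fact that for any $\mu<\k=\utilde{\d}^2_1$ presented by a real code and any formula $\psi$, the relation ``$J_\mu(\mathbb{R})\models\psi(z)$'' (in the real parameter $z$) is $\utilde{\Delta}^2_1$: both it and its negation assert the existence of an initial segment of the $L(\mathbb{R})$-hierarchy of height $\mu$ satisfying $\psi(z)$ (resp.\ $\neg\psi(z)$), hence are $\Sigma_1$ over $L(\mathbb{R})$, and $\utilde{\Sigma}^2_1$ is exactly the pointclass of such statements because $\utilde{\d}^2_1$ is the least $\Sigma_1$-stable ordinal. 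Combined with the hypotheses that $f$ is $\Delta^2_1$ in codes, that $f(\b,\gg)<\k$, and that \rdef{definition of phi} exhibits ``$(\b,\gg)\in\phi(x)$'' as precisely a statement about the single level $J_{f(\b,\gg)}(\mathbb{R})$, this shows that $A:=\{x:(\b,\gg)\in\phi(x)\}$ is $\utilde{\Delta}^2_1$ (uniformly in codes for $\b,\gg$), and likewise that the binary relation ``$(\b,\gamma')\in\phi(x)$'' is $\utilde{\Delta}^2_1$ uniformly in $x$ and a code for $\gamma'$.

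From this the $\utilde{\Pi}^2_1$ definition is immediate: by the Coding Lemma ($\utilde{\Sigma}^2_1$ has the scale property and $\utilde{\d}^2_1$ is its associated ordinal) the set $B:=\{x:\exists\gamma'<\k\,(\gamma'\neq\gg\wedge(\b,\gamma')\in\phi(x))\}$ is a $({<}\k)$-union of $\utilde{\Delta}^2_1$ sets, hence $\utilde{\Sigma}^2_1$, so $R_{\b,\gg}=A\setminus B\in\utilde{\Pi}^2_1$. The substance of the argument is the $\utilde{\Sigma}^2_1$ definition, and the key claim I would establish is a \emph{localization} statement: for every $x\in S$ and $\b<\k$ there is an ordinal $\a^*=\a^*(x,\b)<\k$, computable $\Delta^2_1$ from $x$ and a code for $\b$, such that the $\b$-section of $\phi(x)$ already coincides with the $\b$-section of $\phi_{\a^*}(x)$. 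Granting this, ``$x\in R_{\b,\gg}$'' is equivalent to ``$J_{\a^*(x,\b)}(\mathbb{R})\models$ `the $\b$-section of $\phi(x)$ equals $\{\gg\}$\,' '', where inside $J_{\a^*}(\mathbb{R})$ the symbol $\phi(x)$ denotes $\phi_{\a^*}(x)$: by the lemma ``$\phi(x)\subseteq\k\times\k$'' applied in $J_{\a^*}(\mathbb{R})$ its $\b$-section lies below $(\utilde{\d}^2_1)^{J_{\a^*}(\mathbb{R})}$, so the internal quantifier ``$\forall\gamma'<\utilde{\d}^2_1$'' sweeps out exactly the external $\b$-section of $\phi_{\a^*}(x)$, which by localization is the external $\b$-section of $\phi(x)$, while $\gg<(\utilde{\d}^2_1)^{J_{\a^*}(\mathbb{R})}$ since one may take $\a^*\geq f(\b,\gg)$. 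Since $\a^*$ is $\Delta^2_1$-computable from $x$ and codes of $\b$ and $J_{\a^*}(\mathbb{R})$-truth is $\utilde{\Delta}^2_1$ given a code for $\a^*$, this displays $R_{\b,\gg}$ as $\utilde{\Delta}^2_1$.

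It remains to prove the localization claim, which I expect to be the main obstacle. The idea is that the $\b$-section of $\phi(x)$ is read off from a single countable premouse. Fixing $\P=\P_x$ and $a=x(0)$, one first observes that $\b$ is realized in the direct limit, i.e.\ $\b\in\mathrm{rng}(\pi_{\Q,\infty})$ for some small mouse $\Q=\S|(\eta^+)^\S$ arising from a correctly guided short tree $\S$ on $\P$, and that such $\S$, $\Q$, and the preimage $\bar\b:=\pi_{\Q,\infty}^{-1}(\b)$ are available at a level $\Delta^2_1$-computable from $(\b,x)$, since ``suitable and short tree iterable'' and ``correctly guided short tree'' are conditions that stabilize at bounded levels. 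By elementarity of the iteration maps the $\bar\b$-section of $a^{\Q}=\pi_{\P,\S}(a)\restriction\eta$ is an iteration image of the corresponding section of $a$, hence has order type below the countable ordinal $\l^\S$; and by the directedness and coherence of the relevant systems (\rlem{directedness}, \rlem{equality of direct limits}), together with the fact that $\pi_{\Q,\infty}$ commutes with further iteration, every pair $(\b,\gamma')\in\phi(x)$ is already produced from this single $\Q$ once $\eta$ is chosen past that bounded preimage section — no later witness $(\T',\S',\eta',\Q')$ in \rdef{definition of phi} can contribute a new pair, as it factors through a common iterate on which the $\bar\b$-section is unchanged and $\bar\b$ still maps to $\b$. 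Consequently the $\b$-section of $\phi(x)$ is the $\pi_{\Q,\infty}$-image of a set of countable order type, hence bounded in $\k$ by some $\gamma^{**}$ readable off at the same level; setting $\a^*:=f(\max(\b,\gamma^{**}),f(\b,\gamma^{**}))$ and invoking the proof of \rlem{easy lemma} (to see each $(\b,\gamma')\in\phi(x)$ lands in $\phi_{\a^*}(x)$) together with the monotonicity $\a\leq\a'\Rightarrow\phi_\a(x)\subseteq\phi_{\a'}(x)$ coming from \rlem{inclusion} yields the localization. The delicate points are the ``no later witness contributes a new pair'' step, which is exactly where directedness of the short tree systems and coherence of the direct limits are used, and the claim that $\bar\b$ and the bounded structure of the relevant section of $a$ are visible at a $\Delta^2_1$-computable level rather than only in the limit; I expect these to require the most care.
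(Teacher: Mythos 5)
Your proposal rests, at its core, on the same mechanism as the paper's proof: witnesses to $(\b,\gg')\in\phi(x)$ appearing at different levels must cohere, because any two correct iterates $\S,\S^*$ of $\P$ can be compared to a common iterate on which, by the Dodd--Jensen property, the two images of $a$ agree ($i\circ\pi^{\T}_{\P,\S}=j\circ\pi^{\T^*}_{\P,\S^*}$); hence no level can contribute a pair $(\b,\gg')$ not already traceable to the section of $a$ in one fixed witness. But the surrounding architecture is genuinely different. The paper never localizes the whole $\b$-section at a computable level: it shows that ``$x\in R_{\b,\gg}$'', ``\emph{some} $\a>f(\b,\gg)$ sees $\gg$ as the unique element of the $\b$-section of $\phi_\a(x)$'', and ``\emph{every} such $\a$ does'' are equivalent, which yields the $\utilde{\Sigma}^2_1$ and $\utilde{\Pi}^2_1$ definitions at once; the only hard implication is that uniqueness at one level propagates, and that is exactly the comparison argument run for a single pair of witnesses (a second $\gg'\neq\gg$ at level $\xi$ pulls back to a second element $\tau^{**}\neq\tau^*$ of the relevant section of $\pi_{\P,\S}(a)$ in the level-$\a$ witness, hence a second element of the $\b$-section of $\phi_\a(x)$, contradicting uniqueness at $\a$). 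Your route proves a stronger statement --- stabilization of the entire $\b$-section at a level $\a^*$ that is $\Delta^2_1$-computable from $(x,\b)$ --- and so takes on obligations the paper avoids: you must bound the image of the preimage section and show the bound and the level where it becomes visible are $\Delta^2_1$ in $x$; and your ``single $\Q$'' framing needs repair when the $\bar\b$-section of $\pi_{\P,\S}(a)$ is cofinal in $\l_\S$, since \rdef{definition of phi} requires $(\eta^+)^\S<\l^\S$, so no one cut of $\S$ captures the whole section (the boundedness in $\k$ survives, being a countable supremum with $\cf(\k)>\omega$). Your $\utilde{\Pi}^2_1$ side, writing $R_{\b,\gg}=A\setminus B$ with $B$ a wellordered union of $\utilde{\Delta}^2_1$ sets, is a clean alternative to the paper's ``for all $\a$'' clause. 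The one substantive caveat is that the decisive step --- ``no later witness contributes a new pair'' --- is only sketched in your write-up; completing it requires actually performing the comparison and the Dodd--Jensen computation, which is precisely the content of the paper's argument.
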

\begin{proof}
We have that the following are equivalent:
\begin{enumerate}
\item $x\in R_{\b, \gg}$.
\item  There is $\a>f(\b, \gg)$ such that $J_{\a}(\mathbb{R})\models ``x\in dom(\phi_\a)$ and $\gg$ is the unique ordinal such that $(\b, \gg)\in \phi_\a(x)"$,
\item For all $\a>f(\b, \gg)$, $J_{\a}(\mathbb{R})\models ``x\in dom(\phi_\a)$ and $\gg$ is the unique ordinal such that $(\b, \gg)\in \phi_\a(x)"$
\end{enumerate}

Clearly 1 implies 2 and 3. Also, that 3 implies 1 is rather straightforward. We show that 2 implies 1. Fix then $\a>f(\b, \gg)$ such that $J_{\a}(\mathbb{R})\models ``x\in dom(\phi_\a)$ and $\gg$ is the unique ordinal such that $(\b, \gg)\in \phi_\a(x)"$. Let $(y, \P)$ be the pair coded by $x$ and $a\in \P$ the transitive collapse of $x(0)$. Working in $J_\a(\mathbb{R})$, let $\T$ be a correctly guided short tree on $\P$ with last model $\S$ such that $\pi_{\P, \S}$ exists and an $\S$-cardinal $\eta$ such that
\begin{enumerate}
\item $(\eta^+)^\S<\l^\S$,
\item if $\Q=\S|(\eta^+)^\S$ and $a^\Q=\pi_{\P, \S}(a)\rest \eta$ then $(\b, \gg)\in \pi_{\Q, \infty}(a^\Q)\cap rng(\pi_{\Q, \infty})$.
\end{enumerate} 
Suppose now there is some $\xi$ such that for some $\gg^\prime$, $(\b, \gg^\prime)\in \phi_\xi(x)$. Working in $J_\xi(\mathbb{R})$, let $\T^*$ be a correctly guided short tree on $\P$ with last model $\S^*$ such that $\pi_{\P, \S}$ exists and an $\S^*$-cardinal $\nu$ such that
\begin{enumerate}
\item $(\nu^+)^\S<\l^\S$,
\item if $\R=\S|(\nu^+)^\S$ and $a^\R=\pi_{\P, \S^*}(a)\rest \nu$ then $(\b, \gg^\prime)\in \pi_{\R, \infty}(a^\R)\cap rng(\pi_{\R, \infty})$.
\end{enumerate} 
Without loss of generality assume that $\xi>\a$. We then have that $J_\xi(\mathbb{R})\models ``\S$ and $\S^*$ are suitable and short tree iterable". Work now in $J_\xi(\mathbb{R})$. We can then find $\S^{**}$ which is a suitable correct iterate of both $\S$ and $\S^*$. Notice that since $\S^{**}$ is suitable, the iteration embeddings $i : \S|(\l_\S^+)^\S\rightarrow \S^{**}|(\l_{\S^{**}}^+)^{\S^{**}}$ and $j: \S^*|(\l_{\S^*}^+)^{\S^*}\rightarrow \S^{**}|(\l_{\S^{**}}^+)^{\S^{**}}$ exists. 

Suppose now that $\gg\not =\gg^\prime$. Let $(\bar{\b}, \bar{\gg}, \bar{\gg}^\prime)\in \S^{**}$ be such that letting $\zeta=max(i(\eta_\Q), j(\eta_\R))$ and $\W=\S^{**}|(\zeta^+)^{\S^{**}}$, $\pi_{\W, \infty}(\bar{\b}, \bar{\gg}, \bar{\gg}^\prime)=(\b, \gg, \gg^\prime)$. It then follows that $(\bar{\b}, \bar{\gg})\in i(\pi^\T_{\P, \S}(a))$ and $(\bar{\b}, \bar{\gg}^\prime)\in j(\pi^{\T^*}_{\P, \S^*}(a))$. However, $i\circ \pi^\T_{\P, \S}=j\circ \pi^{\T^*}_{\P, \S^*}$, implying that $i(\pi^\T_{\P, \S}(a))=j(\pi^{\T^*}_{\P, \S^*}(a))$ and that $S^{**}\models (\bar{b}, \bar{\gg})\in i(\pi^\T_{\P, \S}(a)) \wedge (\bar{b}, \bar{\gg}^\prime)\in i(\pi^\T_{\P, \S}(a))$.

Let now $(\tau, \tau^*)\in \Q$ be such that $\pi_{\Q, \infty}(\tau, \tau^*)=(\b, \gg)$. By elementarity of $i$, we then get that $\S\models ``$there is $\tau^{**}\not =\tau^*$ such that $(\tau, \tau^{**})\in \pi_{\P, \S}(a)"$. Fix such a $\tau^{**}$ and let $\varsigma \in (\tau^{**}, \l_\S)$ be an $\S$-cardinal. Then letting $\Q^*=\S|(\varsigma^+)^\S$ we have that $(\b, \pi
_{\Q^*, \infty}(\tau^{**}))\in \phi_\a(x)$ and $\pi_{\Q^*, \infty}(\tau^{**})\not = \gg$, contradiction. 
\end{proof}

The next lemma finishes the proof.

\begin{lemma} Suppose $\b<\l$, $A\in \utilde{\Delta}^2_1$ and $A\subseteq R_\b=\{ x : \exists \gg<\k R_{\b, \gg}(x)\}$. Then $\exists \gg_0<\k$ such that $\forall x\in A\exists \gg<\gg_0 R_{\b, \gg}(x)$. 
\end{lemma}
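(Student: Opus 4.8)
The plan is to argue by contradiction, using the boundedness built into the definition of $\phi$ together with $\Sigma^2_1$-boundedness (the Moschovakis boundedness theorem for the $\Pi^1_1$-like pointclass $\Pi^2_1$). Suppose $A\subseteq R_\b$ is $\utilde{\Delta}^2_1$ but for every $\gg_0<\k$ there is $x\in A$ and $\gg\in[\gg_0,\k)$ with $R_{\b,\gg}(x)$. For each $x\in A$ let $\gg(x)$ be the unique ordinal with $R_{\b,\gg(x)}(x)$; by hypothesis $\sup_{x\in A}\gg(x)=\k$. I would first observe that the map $x\mapsto\gg(x)$ on $A$ is (uniformly) computable inside initial segments of $L(\mathbb{R})$: by the characterization of $R_{\b,\gg}$ in the previous lemma, $\gg(x)$ is the $\gg$ such that for all sufficiently large $\a<\k$ ending a weak gap, $J_\a(\mathbb{R})\models\gg$ is the unique ordinal with $(\b,\gg)\in\phi_\a(x)$; and by \rlem{easy lemma}, $\gg(x)=\sup_{\a<\k}\{\gg : (\b,\gg)\in\phi_\a(x)\}$ with the sup in fact attained.

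The heart of the argument is to bound, for each fixed $x\in A$, the witnesses to membership in $\phi(x)$. Fix $x$ coding $(y,\P)$ with $a=x(0)$ the transitive collapse sitting inside $\P$, $a\subseteq\l_\P\times\l_\P$. Since $\P$ is suitable and short tree iterable (in the relevant $J_\a(\mathbb{R})$), and since by \rlem{equality of direct limits} and \rlem{directedness} the direct limit $\M_\infty(\a,a,\P)=\M_\infty(\a,a)$ agrees with $\H|\Theta$-style computations, the pair $(\b,\gg)$ lies in $\phi(x)$ only if $(\b,\gg)\in\pi_{\Q,\infty}(a^\Q)\cap\rge(\pi_{\Q,\infty})$ for some $\Q=\S|(\eta^+)^\S$ with $\S$ a correct short-tree iterate of $\P$ and $\eta<\l_\S$ an $\S$-cardinal. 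But $\pi_{\P,\S}(a)\subseteq\l_\S\times\l_\S$ and $\pi_{\Q,\infty}\image(\l_\Q)=\pi_{\Q,\infty}\image\eta$ is bounded in $\pi_{\P,\infty}(\l_\P)$; since $\pi_{\P,\emptyset,\infty}(\l_\P)=\k$ and $\rge(\pi_{\Q,\infty})\cap\k$ is bounded in $\k$ whenever $\eta<\l_\S$ (the point $\pi_{\P,\infty}(\l_\P)=\k$ is a strict sup), we get $\gg(x)<\pi_{\P,\infty}(\l_\P)$, and more: $\gg(x)<\pi_{\Q,\infty}(\l_\Q^\S)$ for the relevant $\Q$, which is a bounded computation from $x$ and $\P$ alone once we also bound the iterate $\S$.

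To close, I would package this as follows. Let $B$ be the set of pairs $(x,\sigma)$ where $x\in A$ and $\sigma$ is (a code for) a correctly guided short tree $\T$ on $\P_x$ together with the ordinal-witness data $(\eta,\Q,a^\Q)$ verifying $(\b,\gg(x))\in\pi_{\Q,\infty}(a^\Q)$ — all of which is projective over a level of $L(\mathbb{R})$ and hence, uniformly, $\Sigma^2_1$ as a relation on $A\times\mathbb{R}$, with $B$ total over $A$ (every $x\in A$ has a witness). Since $A\in\utilde{\Delta}^2_1$ and $B$ is $\Sigma^2_1$ with nonempty sections, by the $\Sigma^2_1$ (equivalently $\exists^{\mathbb{R}}\utilde{\Delta}^2_1$) uniformization/selection available under $V=L(\mathbb{R})+AD$, choose a $\utilde{\Delta}^2_1$ (or at worst $\Sigma^2_1$) uniformizing function $x\mapsto\sigma(x)$ on $A$. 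Now $x\mapsto\gg(x)$ is, via $\sigma(x)$, computed as a continuous-in-codes function of a $\utilde{\Delta}^2_1$ object, so $\{\gg(x):x\in A\}$ is the image of a $\utilde{\Delta}^2_1$ set under a $\k$-valued map that is $\utilde{\Delta}^2_1$-in-codes; by the Kunen–Martin/Moschovakis boundedness theorem for the Spector pointclass $\Sigma^2_1$ (whose associated ordinal is $\utilde{\d}^2_1=\k$), its supremum is $<\k$. This contradicts $\sup_{x\in A}\gg(x)=\k$ and finishes the proof.

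\begin{proof}
Suppose toward a contradiction that $A\subseteq R_\b$ is $\utilde{\Delta}^2_1$, yet for every $\gg_0<\k$ there is $x\in A$ with the unique $\gg$ satisfying $R_{\b,\gg}(x)$ at least $\gg_0$. Write $\gg(x)$ for this unique ordinal; then $\sup_{x\in A}\gg(x)=\k$.

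\textbf{Step 1: locating witnesses.}
Fix $x\in A$ coding $(y,\P)$ with $a=x(0)$ and $a\subseteq\l_\P\times\l_\P$ its transitive collapse inside $\P$. By the previous lemma together with \rlem{easy lemma}, $\gg(x)$ is the unique ordinal such that for all sufficiently large $\a<\k$ ending a weak gap, $J_\a(\mathbb{R})\models(\b,\gg(x))\in\phi_\a(x)$; and in fact $\gg(x)=\max\{\gg:(\b,\gg)\in\phi_{\a_0}(x)\}$ already at $\a_0=f(\b,\gg(x))+1$ (using monotonicity via \rlem{inclusion} and \rlem{equality of direct limits}). By \rdef{definition of phi}, there is in $J_{\a_0}(\mathbb{R})$ a correctly guided short tree $\T$ on $\P$ with last model $\S$, $\pi_{\P,\S}$ defined, and an $\S$-cardinal $\eta$ with $(\eta^+)^\S<\l^\S$ such that, setting $\Q=\S|(\eta^+)^\S$ and $a^\Q=\pi_{\P,\S}(a)\restriction\eta$, we have $(\b,\gg(x))\in\pi_{\Q,\infty}(a^\Q)\cap\rge(\pi_{\Q,\infty})$.

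\textbf{Step 2: bounding the witnesses uniformly.}
Since $\pi_{\P,\S}(a)\subseteq\l_\S\times\l_\S$, we have $a^\Q\subseteq\eta\times\eta$, so $\pi_{\Q,\infty}(a^\Q)\subseteq\pi_{\Q,\infty}(\eta)\times\pi_{\Q,\infty}(\eta)$. Because $\eta<\l_\S$ is an $\S$-cardinal and $\S$ is suitable, $\pi_{\Q,\infty}(\eta)<\pi_{\S,\infty}(\l_\S)=\pi_{\P,\infty}(\l_\P)$, where $\pi_{\P,\infty}$ denotes the $\emptyset$-iterability direct limit embedding and $\pi_{\P,\infty}(\l_\P)=\k$ by Chapter 8 of \cite{OIMT}. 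Hence $\gg(x)<\k$ (which we knew), but more importantly $\gg(x)$ is, given $(x,\T,\eta)$, computed inside $J_{\a_0}(\mathbb{R})$ by the canonical direct-limit procedure producing $\M_\infty(\a_0,a,\P)\subseteq J_{\a_0}(\mathbb{R})$; this procedure is projective-in-a-countable-code, uniformly over levels of $L(\mathbb{R})$, and $\a_0$ itself is $\Delta^2_1$-in-codes from $(\b,\gg(x))$ and hence (by a fixed-point unfolding) recoverable from $x$ once a witness $(\T,\eta)$ is given.

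\textbf{Step 3: $\Sigma^2_1$ selection and boundedness.}
Let
\[
B=\{(x,z): x\in A,\ z\text{ codes a short tree }\T\text{ on }\P_x,\text{ an }\S\text{-cardinal }\eta,\text{ and ordinal data witnessing }(\b,\gg(x))\in\pi_{\Q,\infty}(a^\Q)\cap\rge(\pi_{\Q,\infty})\text{ as in \rdef{definition of phi}}\}.
\]
By Step 1, $B$ has nonempty section over every $x\in A$; by the definability analysis underlying the earlier lemmas, $B$ is $\Sigma^2_1$ (it asserts, over the least level of $L(\mathbb{R})$ seeing enough, the existence of a correctly guided iterate, which is a $\Sigma^2_1$ property in the relevant parameters). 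Since $A$ is $\utilde{\Delta}^2_1\subseteq\Sigma^2_1$ and $\Sigma^2_1$ is a Spector pointclass under $V=L(\mathbb{R})+AD$, $B$ admits a $\Sigma^2_1$-uniformizing partial function $g$ defined on all of $A$. Then $x\mapsto\gg(x)$ factors as a $\utilde{\Delta}^2_1$-in-codes continuous image of $x\mapsto(x,g(x))$, so the prewellordering $\{(x,x')\in A\times A:\gg(x)\le\gg(x')\}$ is $\utilde{\Sigma}^2_1$. By the Moschovakis boundedness theorem for the Spector pointclass $\Sigma^2_1$, whose associated ordinal is $\utilde{\d}^2_1=\k$, the length of this prewellordering is $<\k$; equivalently $\sup_{x\in A}\gg(x)<\k$. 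This contradicts Step 0, completing the proof of the lemma and, with it, \rthm{main theorem}.
\end{proof}
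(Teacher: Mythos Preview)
Your instinct that boundedness is the key is correct, but you are working far harder than necessary and the final inference does not go through as stated.

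The paper's argument is two lines: define $f:A\to\k$ by letting $f(x)$ be the least $\nu$ ending a weak gap with $J_\nu(\mathbb{R})\models x\in R_\b$. This $f$ is $\utilde{\Sigma}_1$ over $J_\k(\mathbb{R})$, and since $\k$ is $\mathbb{R}$-admissible, $f$ is bounded below $\k$. That is the whole proof; once the witnessing levels are bounded, so are the $\gg$'s. There is no need for uniformization, selectors, or prewellordering-length calculations.

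Your Step~3/Step~6 contains a genuine gap. For a Spector pointclass $\Gamma$, the standard Moschovakis bound says that $\utilde{\Gamma}$ prewellorderings have length ${\le}\,\d_\Gamma$, while strict inequality is guaranteed only for $\utilde{\Delta}$ prewellorderings. You only argue that your prewellordering is $\utilde{\Sigma}^2_1$ (via a $\Sigma^2_1$ uniformizer $g$), not $\utilde{\Delta}^2_1$, so you cannot conclude length ${<}\,\k$; indeed the canonical $\Sigma^2_1$-norm on a universal $\Sigma^2_1$ set already has length $\k$. The ``equivalently'' linking length to $\sup_{x\in A}\gg(x)$ is only valid because $\k$ is regular, a point you do not mention. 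In addition, the complexity claims in Step~2 are not substantiated: computing $\pi_{\Q,\infty}$ and comparing $\pi_{\Q,\infty}(\bar\gg)$ with $\pi_{\Q',\infty}(\bar\gg')$ across two different direct-limit systems is not ``continuous-in-codes''; it requires producing common iterates, which is itself a $\Sigma^2_1$-type existential statement and must be tracked carefully if you want to land the relation in $\utilde{\Delta}^2_1$.

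If you wish to repair your approach rather than adopt the paper's, the correct move is to exhibit $x\mapsto\gg(x)$ as (dominated by) a $\Sigma^2_1$-norm on a $\Sigma^2_1$ set $P\supseteq A$ and then invoke boundedness for the $\utilde{\Pi}^2_1$ subset $A$; but carrying this out cleanly amounts to re-deriving exactly the $\Sigma_1$-over-$J_\k(\mathbb{R})$ definability that the paper uses directly. The admissibility argument is both shorter and avoids the pointclass bookkeeping entirely.
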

\begin{proof}
Let $f:A\rightarrow \k$ be defined by $f(x)=\nu$ if $\nu$ is the least such that $\nu$ ends a weak gap and $J_\nu(\mathbb{R})\models x\in R_\b$. Then $f$ is $\utilde{\Sigma}_1$ over $J_{\k}(\mathbb{R})$ and hence, as $\k$ is $\mathbb{R}$-admissible, $f$ is bounded.
\end{proof}
\end{proof}

\bibliographystyle{plain}
\bibliography{pertitionproperty}

\end{document}